\newtheorem{theorem}{Theorem}
\newcommand{\invs}{\rule[-.01in]{0in}{.18in}}
\begin{document}


\title{Sum of two repdigits a square} 
\author{Bart Goddard}
\author{Jeremy Rouse}

\address{Department of Mathematics, University of Texas at Austin, Austin,
  TX 78712, USA} 
\email{goddardb@math.utexas.edu} 
\address{Department of Mathematics and Statistics, Wake Forest University, Winston-Salem, NC 27109, USA}
\email{rouseja@wfu.edu}
\subjclass[2010]{Primary 11D61; Secondary 11G05, 11A63}
\begin{abstract}
  A \emph{repdigit} is a natural number greater than 10 which has all
  of its base-10 digits the same.  In this paper we find all examples
  of two repdigits adding to a square.  The proofs lead to interesting
  questions about consecutive quadratic residues and non-residues, and
  provide an elementary application of elliptic curves.
\end{abstract}

\maketitle

\section{Preliminaries and Notation}

At the joint birthday party of Bilbo and Frodo Baggins \cite{Tolkein},
the head table seats 144 which, we are told, is the sum eleventy-one
and thirty-three, the respective ages of the two hobbits.  A
\emph{repdigit} is a positive integer all of whose (base-10) digits
are the same.  The hobbits have provided us with an example of two
repdigits adding to a perfect square: $111+33 = 144$.  In this paper
we find all examples of two repdigits summing to a square.

While Sloan (A010785) allows 1-digit numbers to be repdigits, in this paper
we will not, (since nothing is \emph{rep}eated.)  In this way, we avoid examples
like $2+2=4$ and $7+9=16$.  Also, if we add a repdigit consisting of an
even number of 9's to 1 (a 1-digit repdigit) the result is an even power
of 10, which is a perfect square.  We prefer to avoid these cases and
assume all repdigits are greater than $10$.

For the sake of definiteness, if $a$ is a decimal digit, the repdigit
$aa\ldots a$, consisting of $m$ $a$'s will be denoted $a_m$.  In this
notation, the hobbit example reads $1_3 + 3_2 = 12^2$.  A little algebra
yields the formula:

$$a_m = aa\ldots a = a(11\ldots 1) = a\left(\frac{10^m-1}{9}\right).$$

The main result of the present paper is a complete classification
of perfect squares that are the sum of two repdigits.

\begin{theorem}
\label{main}
The only perfect squares that are sums of two repdigits are the following:
\begin{align*}
  11^{2} &= 99+22 = 88+33 + 77+44 = 66+55\\
  12^{2} &= 111+33\\
  38^{2} &= 1111+333\\
  211^{2} &= 44444 + 77.
\end{align*}
\end{theorem}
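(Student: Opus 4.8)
The plan is to rewrite the equation in the cleaner form $c^2 = aR_m + bR_n$, where $R_k = (10^k-1)/9$ denotes the base-$10$ repunit, impose congruence restrictions to cut down the possibilities, and then reduce to a finite list of auxiliary equations, each of which turns out to be either a Pell-type conic or a genus-one quartic that can be solved outright. To begin, write $a_m = a(10^m-1)/9$, cancel the common factor $9$, and assume without loss of generality $m \ge n \ge 2$. Reducing $9c^2 = a\cdot 10^m + b\cdot 10^n - (a+b)$ modulo $100$ (valid since $m,n\ge 2$) gives $c^2 \equiv 11(a+b) \pmod{100}$; as $a+b$ runs only over $2,\dots,18$, a check of which of these make $11(a+b)$ a square modulo $4$ and modulo $25$ leaves exactly $a+b \in \{4,11,16\}$.

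Next I dispose of the diagonal case $m=n$: there $9c^2 = (a+b)(10^n-1)$, so $c^2$ equals $4R_n$, $11R_n$, or $16R_n$, and since for $n\ge 2$ the repunit $R_n$ is $\equiv 3$ or $\equiv 7 \pmod 8$ and hence never a square, the only possibility is $a+b=11$, $n=2$, $c=11$ --- this yields the first line of the theorem. For $m>n$ we have $2^n \mid 9c^2+(a+b)$, and comparing this with the restriction that squares are $\equiv 0,1,4\pmod 8$ and $\equiv 0,1,4,9\pmod{16}$ forces $n$ to be small in each of the three cases (I expect $n=2$ when $a+b=11$, $n\le 3$ when $a+b=4$, and $n\le 5$ when $a+b=16$) and also restricts the admissible digit pairs $(a,b)$. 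This leaves finitely many triples $(a,b,n)$, and for each one the equation takes the form $9c^2 = a\cdot 10^m + C$ with $C = C(a,b,n)$ an explicit constant and $m$ the sole free variable.

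For each of these equations I split on $m$ modulo $4$, writing $10^m$ as $(10^{m/2})^2$ or $10\,(10^{(m-1)/2})^2$ and then treating the remaining power of $10$ the same way. When the leading coefficient becomes a perfect square --- as in the branch giving $38^2 = 1111+333$ with $(a,b,n)=(1,3,3)$ --- the equation is a difference of two squares equal to $C$, and is finished by listing the factorizations of $C$. Otherwise one is led to an equation $u^2 = (\text{quartic in a power of }10)$, a curve of genus one; I put it in Weierstrass form, determine all of its integral points by rank-zero arguments, descent, or standard software, and keep only those whose $X$-coordinate is an actual power of $10$. The remaining two solutions of the theorem, $12^2 = 111+33$ and $211^2 = 44444+77$, come out of this step, from the curves attached to $(a,b,n)=(1,3,2)$ and $(4,7,2)$, and no other solution survives.

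The essential difficulty is that $m$ is not bounded a priori, so no single congruence can eliminate an entire residue class of exponents; the real content lies in correctly reducing each surviving family to a conic or an elliptic curve and in carrying out the (finite, but not purely mechanical) computation of integral points on those curves. The careful bookkeeping required to confirm that the $2$-adic and related local conditions really do shrink the case list to this manageable size is also where the questions about runs of consecutive quadratic residues and non-residues referred to in the abstract make their appearance.
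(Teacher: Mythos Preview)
Your plan is sound and follows the same overall strategy as the paper --- restrict $a+b$ to $\{4,11,16\}$ by a mod-$100$ argument, bound the smaller exponent $n$, and for each surviving triple $(a,b,n)$ reduce the one-parameter family in $m$ to an elliptic curve whose integer points are then computed. The tactics differ in two places. First, the paper bounds $n<6$ in one stroke by observing that none of $-2,\dots,-18$ is a square modulo $10^{6}$ (this is the run of consecutive non-residues alluded to in the abstract), whereas you bound $n$ case by case with $2$-adic conditions, obtaining the same or sharper bounds. Second, the paper splits $m$ modulo $3$, so that $a\cdot 10^{m}$ becomes (up to a constant) a cube and one lands directly on Mordell equations $y^{2}=x^{3}+N$; you split $m$ modulo $4$, landing on quartics $y^{2}=A x^{4}+C$, which collapse to a difference of two squares when $A$ happens to be a perfect square. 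Both routes finish with a machine computation of integral points on a finite list of genus-one curves, so neither is essentially more elementary; the paper's curves are already in Weierstrass form, while yours occasionally admit the factoring shortcut. One caution: your phrase ``Pell-type conic'' suggests a split on $m\bmod 2$ only, which would produce genuine Pell equations with infinitely many integer solutions and would not by itself finish the argument --- it is the further split to $m\bmod 4$ that you go on to describe that is actually needed.
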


In Sections~\ref{mod1} and \ref{mod2} we use congruence conditions to
show that if $a_{m} + b_{n}$ is a perfect square then one of $m$ or
$n$ is $< 6$ and eliminate as many cases as possible. In
Section~\ref{ec}, we handle the remaining cases by using results about
integer points on elliptic curves and conclude the proof of
Theorem~\ref{main}. In Section~\ref{further} we raise some further
questions.

\section{A Couple of Pleasant Properties of Base-10}
\label{mod1}

Of course, once our result is proven in base-10, we would want to
generalize to other bases.  However, $10-1$ is a perfect square, a
 property of $10$ not true of most bases, so the
problem may be much harder in non-decimal bases.  Another happy
accident is that there is a useful string of consecutive quadratic non-residues
modulo $10^6$, which is not to be expected using other bases. This
all comes out in our first theorem, where we show that in the case of
two repdigits adding to a square, at least one of the repdigits has
fewer than 6 digits:

\begin{theorem}
\label{lessthansix}
Let $a$ and $b$ be non-zero base-10 digits.  If $a_m + b_n$ is a
perfect square, with $m \geq n \geq 2$, then $n<6$.
\end{theorem}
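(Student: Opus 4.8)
\emph{Proof plan.} The plan is to reduce the equation $a_m+b_n=s^2$ modulo $10^6$ and to exploit the fact that the last six digits of a repdigit with at least six digits depend only on its repeated digit. Assuming for contradiction that $a_m+b_n=s^2$ with $m\ge n\ge 6$ (so that $m\ge 6$ too), note that $10^k\equiv 0\pmod{10^6}$ for every $k\ge 6$, while $9\cdot 111111=999999\equiv -1\pmod{10^6}$ and $\gcd(9,10)=1$. Hence the formula $a_m=a\left(\frac{10^m-1}{9}\right)$ gives
$$a_m\equiv -\frac{a}{9}\equiv a\cdot 111111\pmod{10^6},\qquad b_n\equiv b\cdot 111111\pmod{10^6},$$
so $s^2\equiv(a+b)\cdot 111111\pmod{10^6}$; that is, $(a+b)\cdot 111111$ must be a square modulo $10^6$. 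Since $a$ and $b$ are nonzero digits, $a+b$ is one of the seventeen integers $2,3,\dots,18$, so the whole theorem comes down to checking that none of $2\cdot 111111,\,3\cdot 111111,\,\dots,\,18\cdot 111111$ is a square modulo $10^6$.

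This verification is finite, and it is here that the arithmetic of base $10$ cooperates. By the Chinese Remainder Theorem a square modulo $10^6$ is a square modulo $2^6$ and modulo $5^6$. For fifteen of the seventeen multiples the residue $k\cdot 111111\bmod 10^6$ is already a non-square modulo $2^6=64$: for the even multiples this follows from a brief inspection of the $2$-adic valuation together with the odd part modulo $8$, and for the odd multiples other than $k=7$ and $k=15$ the residue is $\not\equiv 1\pmod{8}$. For the two remaining multiples, $7\cdot 111111=777777$ and $15\cdot 111111\equiv 666665$, which \emph{are} squares modulo $64$, one instead reduces modulo $25$: $777777\equiv 2$ and $666665\equiv 15$ modulo $25$, and neither $2$ nor $15$ is a square modulo $25$. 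This exhausts all seventeen cases and produces the contradiction, so $n<6$.

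I expect the main difficulty to be organizational rather than conceptual: the creative step is recognizing that reduction modulo $10^6$ suffices, which relies on two happy features of base $10$ — that $10-1=9$ is a perfect square, so that the repunit $\frac{10^m-1}{9}$ has an $m$-independent residue modulo $10^6$, and that the seventeen multiples $k\cdot 111111$ with $2\le k\le 18$ happen to form a run of quadratic non-residues modulo $10^6$. The one point to be careful about in the write-up is that two of the candidates (those from $a+b=7$ and $a+b=15$) are non-squares only on the $5$-adic side, so the $2$-adic argument by itself does not complete the proof.
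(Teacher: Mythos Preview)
Your argument is correct and is essentially the paper's own proof. Both reduce modulo $10^{6}$ and then check that the seventeen possible residues are all non-squares; the paper first multiplies through by $9$ to get $-(a+b)\equiv (3k)^{2}\pmod{10^{6}}$ and appeals to a Maple-generated table, whereas you keep the equivalent form $s^{2}\equiv (a+b)\cdot 111111\pmod{10^{6}}$ and do the seventeen checks by hand via CRT (mod $64$ for fifteen of them, mod $25$ for $a+b=7,15$). One tiny wording nit: for $k=16$ the ``odd part modulo $8$'' heuristic only sees the odd part modulo $4$ inside $\mathbb{Z}/64\mathbb{Z}$, but the conclusion ($16\cdot 111111\equiv 48$ is a non-square mod $64$) is still correct.
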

\begin{proof}
Suppose that $a_m+b_n = k^2$ for some integer $k$, and
that $m$ and $n$ are both at least 6.  Then using
our formula above, we have

$$ a\left(\frac{10^m-1}{9}\right) + b\left(\frac{10^n-1}{9}\right) = k^2 $$

which simplifies to

\begin{equation}
a10^m +b10^n -(a+b) = (3k)^2.
\end{equation}

Since $m$ and $n \geq 6$, we can reduce this last equation modulo $10^6$
to get

$$-(a+b) \equiv (3k)^2 \pmod{10^6}.$$

Because $a$ and $b$ are non-zero base-10 digits, $-(a+b)$ must be an integer
between $-2$ and $-18$, inclusive.  The congruence above insists that
$-(a+b)$ is a quadratic residue modulo $10^6$.  But inspection of
Table A in the Appendix shows that this is impossible. That is, the
last column in the table consists of all X's, which means that none
of the numbers $-2, -3, -4, \ldots, -18$ is a quadratic residue
modulo $10^6$. We conclude that $n<6$.
\end{proof}

\section{Congruence Considerations}
\label{mod2}

The rest of this paper is devoted to showing that $m$ must also
be less than $6.$  So we now assume that $m\geq 6$ and start
eliminating cases. Our definition of \emph{repdigit} implies that
$m$ and $n$ are both at least $2$, so reducing Equation (1)
modulo $10^2$ gives us

$$-(a+b) \equiv (3k)^2 \pmod{10^2}.$$

Inspection of the second column of Table A shows us that $-(a+b)
\equiv -4, -11$, or $-16 \pmod{10^2}$, which divides our work into
three cases.

\vspace*{.1in}

\noindent{\bf Case 1:} $(a+b) = 4.$

\vspace*{.1in}

In this case the only possibilities are $(a,b) = (3,1), (2,2), \text{ or } (1,3)$.
Also note that, from Table A, $-4$ is a quadratic non-residue modulo $10^4$,
so we must have $n<4$ in this case. (Else, we reduce Equation (1) modulo
$10^4$ and get a contradiction.)  Thus, the only possibilities are:

$$3_m+11, 2_m + 22, 1_m + 33, 3_m +111, 2_m + 222, \mbox{ and } 1_m+333.$$

Four of these six subcases can be eliminated by congruence considerations.
If we put the case $3_m+11$ into equation (1) and reduce modulo $10^6$, we have

$$3\cdot10^m -3 +10^2 - 1 \equiv 96 \equiv (3k)^2 \pmod{10^6}.$$

A solution to this congruence implies $96+d10^6 = x^2$ for some integers $d$
and $x$.  Factoring $2^5$ from the left side gives us $2^5(3+2d \cdot 5^6) = x^2$.
The quantity in parentheses is odd, and so $x^2$ must be exactly divisible
by $2^5$, which is impossible.  In other words, $96$ is not a quadratic
residue of $10^6$, so there is no solution.  In the future, we'll just ask Maple
whether a number is a quadratic residue.

With a similar calculation, the three subcases $1_m+33$, $2_m +222$ and $1_m+333$
require $296$, $1996$ and $2996$, respectively, to be quadratic residues modulo $10^6$.
Maple says that all three of them are quadratic nonresidues, so we have eliminated
these three subcases.  We will handle the remaining cases $2_m+22$ and $3_m+111$
later.

\vspace*{.1in}

\noindent{\bf Case 2:} $(a+b) = 11.$

\vspace*{.1in}

In this case the possibilities are $(a,b) = (2,9)$, $(3,8)$, $(4,7)$, $(5,6)$,
$(6,5)$, $(7,4)$, $(8,3)$, or $(9,2).$   Note that if $n\geq 3$, then modulo $10^3$, equation
(1) reduces to $-11 \equiv (3k)^2 \pmod{10^3}$, and that $-11$ is a quadratic nonresidue
of $10^3.$  Therefore $n=2$ is the only possibility.  This leaves us with 8 subcases:
$2_m+99$, $3_m +88$, $4_m +77$, $5_m+66$, $6_m+55$, $7_m+44$, $8_m+33$, and $9_m+22$.

As we did in Case 1, we put each of these subcases in equation (1) and reduce
modulo $10^6$.  Respectively, these congruences require $889$, $789$, $689$,
$589$, $489$, $389$, $289$ and $189$ to be quadratic residues modulo $10^6$.
Maple says that $789$, $589$, $389$ and $189$ are quadratic non-residues
module $10^6$, so we have eliminated the subcases $3_m+88$, $5_m+66$, $7_m+44$
and $9_m+22$.   The other four subcases we save for Section~\ref{ec}.

\vspace*{.1in}

\noindent{\bf Case 3:} $(a+b) = 16.$

\vspace*{.1in}

In this case the possibilities are $(a,b) = (7,9)$, $(8,8)$ or
$(9,7)$.  The $(8,8)$ subcase would have $8_m + 8_n = k^2$, in which
case $k$ would be even and we could divide the equation by $4$ to get
$2_m + 2_n = (k/2)^2$, which is one of our leftover subcases from Case
1.  When $2_m +22$ is eliminated in Section~\ref{ec}, it will take
this subcase with it.

From Theorem 1, $n\leq 5$, so the possibilites are $9_m+77$, $9_m + 777$,
$9_m+7777$, $9_m+77777$, $7_m+99$, $7_m+999$, $7_m + 9999$, and $7_m + 99999.$
We again put each subcase into equation (1) and reduce modulo $10^6$. Respectively,
$700 - 16$, $7000-16$, $70000 - 16$, $700000 - 16$, $900-16$, $9000-16$,
$90000 - 16$ and $900000-16$ are required to be quadratic residues modulo $10^6$.
But only $700000-16$, $90000 - 16$ and $900000-16$ are.  So we are left
with only $9_m + 77777$, $7_m+9999$, and $7_m+99999$ as possibilities in this case.

It turns out that $90000-16$ and $700000-16$ are not a quadratic residues modulo 7,
so we can also eliminate $9_m+77777$ and $7_m+9999$ from the list.

\section{Elliptic Curve Considerations}
\label{ec}

We have eliminated all cases except this short list:  $2_m+22$, $3_m+111$,
$2_m+99$, $4_m+77$, $6_m+55$, $8_m+33$ and $7_m+99999$.  Each of these cases
will be broken into three subcases depending on the character of $m$ modulo
3.  In most cases, this will eliminate the case.  In the rest, we will discover
the known solutions to the problem.  We will rely on SAGE to find all the
integer points on the elliptic curves which arise.

We'll start with $8_m+33$.  Suppose $m=3l$ for some integer $l$.  Then
equation (1) becomes $8\cdot 10^{3l} + 300 - (8+3) = (3k)^2$.  If we
set $y = 3k$ and $x=2\cdot 10^l$, the equation is $y^2 = x^3 +
289$.
According to Siegel's famous theorem \cite{Siegel}, this curve can
have only finitely many integral points.  SAGE 6.7 implements an
algorithm due to Pethö, Zimmer, Gebel, and Herrmann \cite{PZGH}, which
will find all integral points on an elliptic curve.
SAGE says that the only integer points on this curve are $(-4, 15)$,
$(0,17)$ and $(68, 561)$.  We can observe that none of the $x$-values
are of the form $2\cdot 10^l$, which eliminates this possibility.

Next suppose that $m=3l+1$ for some integer $l$.  Equation (1), for this case,
reads $8\cdot 10^{3l+1} + 289 = (3k)^2.$  Multiply through
by $100$ to get $8\cdot 10^{3l+3} + 28900 = (30k)^2$.  Set $x=2\cdot 10^{l+1}$
and $y=30k$ and we have $y^2 = x^3+28900$.  SAGE says that the only
integer point on this curve is $(0,170)$ and since $0 \neq 2\cdot 10^{l+1}$,
we have eliminated this case.

Next suppose that $m=3l+2$ for some integer $l$.  Equation (1), for this case
reads $8\cdot 10^{3l+2} + 289 = (3k)^2$.  Multiply through by $10^4$ to get
$8\cdot 10^{3l+6}+289\cdot 100^2 = (300k)^2$.  Set $y=300k$ and $x=2\cdot 10^{l+2}$
and we have $y^2 = x^3+2890000$.  SAGE says that the only integer points on
this curve are $(-136,612)$, $(0,1700)$, $(200,3300)$ and $(425,8925)$.  Looking
at the $x$-values, we see one that fits:  $200 = 2\cdot10^{l+2}$.  So $l = 0$
and $m=2$ and we're led to the solution $88+33 = 121$.

\vspace{.1in}

The other cases yield similarly.  If $m=3l+r$, with $r=0,1,$ or $2$, then
we multiply equation (1) by $a^2 10^r$ to get

$$a^310^{3(l+r)} +a^2 10^r(b10^n-(a+b)) = (3a\cdot10^r k)^2.$$

Then we set $x=a10^{l+r}, y= 3a\cdot10^rk$ and $N=a^2 10^r(b10^n-(a+b))$ to
get the elliptic curve

$$y^2 = x^3 + N.$$

Then we ask SAGE to give us all
the integer points on the curve.  If any of those points have $x$-coordinate
of the form $a \cdot 10^p$, then we might have a solution (and in each case,
it will turn out that $m<6$.)  If not, the case
is eliminated. In the case $a=8$, which is already a cube, it is
sufficient to multiply equation (1) by $10^r$, as we did above.
Table B in the Appendix summarizes the calculations, where $r$ is the least
non-negative residue of $m$ modulo $3$.  Whenever the $x$-coordinate of
an integer point matches the form $a10^{l+r}$, we put that number in bold
in the last column of the table.  In each such case, $l=0$ or $1$, which
means that $m<6$.

\begin{proof}[Proof of Theorem~\ref{main}]
Therefore every case of the sum of two repdigits equaling a square must
have repdigits of 5 or fewer digits.  There are only 45 repdigits of
2, 3, 4 or 5 digits, so that leaves us $45\cdot46/2 = 1035$ cases to check.
This is easily done and we find the complete list of solutions as in the
statement.
\end{proof}

\section{Further Observations and Questions}
\label{further}

The proof of Theorem 2 relies entirely on the fact that there is a
convenient string of consecutive quadratic non-residues modulo $10^6$
(due mostly to the fact that $10$ is even.)  Finding strings of
consecutive quadratic residues and non-residues has been studied. (See
\cite{BH} and \cite{Wright}.)  Certainly base-10 is special, but there
are some observations in other bases:

We noted above that there is a family of solutions $22+99 = 33+ 88 =
44+77 = 55+66 = 11^2$ in base 10. In an arbitrary base $c \geq 2$,
we have $(c+1)^2 = (c-k)(c+1) + (k+1)(c+1)$ for $1 \leq k \leq c-1$.
This shows that there is a similar family regardless of the choice of base.

Similarly, $(c+2)^2 = c^2 + 4c + 4 = 1 (c^2+c+1) + 3 (c+1)$ shows that
$111+33 = $ is a square in every base.  Also $(2c^2 + c + 1)^2 =
4c^4 + 4c^3 + 5c^2 + 2c + 1 = 4(c^4+c^3+c^2+c+1) + (c-3)(c+1)$, so
$44444+77$ is just the base 10 manifestation of this identiy.

In the more special case that $c= m^2 - 1$, we have the identity
$c^5 + c^4 + 4c^3 + 4c^2 + 4c + 4 = (c+1)(c^2+2)^2 = m^2 (c^2+2)^2$,
which is to say $(111111)_{c} + (3333)_{c}$ is a square.

The most interesting solutions to our equation seem to come in groups.
$111+33=12^2$ and $1111 + 333 = 38^2$ seem related, as do $4+77 = 9^2$,
$44+77=11^2$ and $44444+77 = 221^2$.  Also the infinite number of solutions
$9_{2m}+1 = (10^m)^2.$  (Here we're allowing 1-digit solutions.)  As
far as we know, this is accidental, but perhaps there's a useful
reason behind this.  If so, maybe there's a solution to the problem
that doesn't rely on SAGE computing on elliptic curves(?)

As mentioned earlier, the case of base 10 is easier than some others.
Of particular note is base 7, where there are a number of examples,
the most striking of which is
\[
  48060^2 = 2309763600 = (11111111111)_{7} + (3333333)_{7}.
\]
It seems difficult to use modular arithmetic to put a bound on $m$ and $n$
subject to
\[
  a \cdot \frac{7^{m} - 1}{6} + b \cdot \frac{7^{n} - 1}{6} = x^{2}.
\]
The solutions to this equation are a subset of those to
\[
  A u^{5} + B v^{5} + Cx^{5} = 6w^{2} x^{3}.
\]
This defines a (possibly singular) surface of general type, and it is
conjectured by Bombieri and Lang that the \emph{rational} points
on a surface of general type lie on a finite union of curves. This supports
our suspicion that there should only be finitely many solutions, but a proof
eludes us.

\pagebreak

\section{Appendix: Tables}

{\bf Table A:  Quadratic residues modulo $10^n$.}  In the following table, generated
with Maple, the entry is O if $-(a+b)$ is a quadratic residue modulo $10^k$ and X otherwise.

\vspace*{.1in}

\begin{tabular}[t]{|c|c|c|c|c|c|}\hline
$-(a+b)$ & $10^2$ & $10^3$ & $10^4$ & $10^5$ & $10^6$ \invs \\ \hline\hline
$-2$ & X & X & X & X & X \invs \\ \hline
$-3$ & X & X & X & X & X \invs \\ \hline
$-4$ & O & O & X & X & X \invs \\ \hline
$-5$ & X & X & X & X & X \invs \\ \hline
$-6$ & X & X & X & X & X \invs \\ \hline
$-7$ & X & X & X & X & X \invs \\ \hline
$-8$ & X & X & X & X & X \invs \\ \hline
$-9$ & X & X & X & X & X \invs \\ \hline
$-10$ & X & X & X & X & X \invs \\ \hline
$-11$ & O & X & X & X & X \invs \\ \hline
$-12$ & X & X & X & X & X \invs \\ \hline
$-13$ & X & X & X & X & X \invs \\ \hline
$-14$ & X & X & X & X & X \invs \\ \hline
$-15$ & X & X & X & X & X \invs \\ \hline
$-16$ & O & O & O & O & X \invs \\ \hline
$-17$ & X & X & X & X & X \invs \\ \hline
$-18$ & X & X & X & X & X \invs \\ \hline

\end{tabular}

\pagebreak

{\bf Table B:  Summary of calculations.}

\vspace*{.1in}

\begin{tabular}[t]{|c|c|c|c|c|c|}\hline
Case  & $r$ & $x$ & $y$ &  $N$ & $x$-coords \invs \\ \hline\hline
$8_m+33$ & 0 & $2\cdot10^l$ & $3k$ & $289$ & $-4, 0, 68 $ \invs \\ \hline
$8_m+33$ & 1 & $2\cdot10^{l+1}$ & $30k$ & $28900$ & $ 0 $ \invs \\ \hline
$8_m+33$ & 2 & $2\cdot10^{l+2}$ & $300k$ & $2890000$ & $ -136, 0, \mathbf{200}, 425 $ \invs \\ \hline
$7_m+99999$ & 0 & $7\cdot10^l$ & $21k$ & $44099216$ & $10577$ \invs \\ \hline
$7_m+99999$ & 1 & $7\cdot10^{l+1} $ & $210k$ & $4409921600$ & $-1064$ \invs \\ \hline
$7_m+99999$ & 2 & $7\cdot10^{l+2} $ & $2100k$ & $440992160000$ & $-5936, -5900, -5516, 2800,$ \invs \\
& & & & & $20825, 21056, 721364000$ \invs \\ \hline
$6_m+55$ & 0 & $6\cdot10^l$ & $18k$ & $17604$ & $-20, -12, 816$ \invs \\ \hline
$6_m+55$ & 1 & $6\cdot10^{l+1} $ & $180k$ & $1760400$ & $-120, 24, 160, 14640$ \invs \\ \hline
$6_m+55$ & 2 & $6\cdot10^{l+2} $ & $1800k$ & $176040000$ & $\mathbf{600}$ \invs \\ \hline
$4_m+77$ & 0 & $4\cdot10^l$ & $12k$ & $11024$ & 1 \invs \\ \hline
$4_m+77$ & 1 & $4\cdot10^{l+1}$ & $120k$ & $1102400$ & $-100, -95, -16, \mathbf{40}, 160,$ \invs \\
& & & & & $584, 1420, 26764 $ \invs \\ \hline
$4_m+77$ & 2 & $4\cdot10^{l+2}$ & $1200k$ & $110240000$ & $-464, -400, 64, \mathbf{400},$ \invs \\
& & & & & $425, 625, 1076, $ \invs \\
& & & & & $\mathbf{4000}, 1154800$ \invs \\ \hline
$3_m+111$ & 0 & $3\cdot10^l$ & $9k$ & $8964$ & $21$\invs \\ \hline
$3_m+111$ & 1 & $3\cdot10^{l+1}$ & $90k$ & $896400$ & $-96, -80, -15, 25, $ \invs \\
& & & & & $40, 49, 120, 256, $ \invs \\
& & & & & $280, 1200, 16576$ \invs \\ \hline
$3_m+111$ & 2 & $3\cdot10^{l+2}$ & $900k$ & $89640000$ & $-375, 124, \mathbf{300},$ \invs \\
& & & & & $700, 5241 $ \invs \\ \hline
$2_m+99$ & 0 & $2\cdot10^l$ & $6k$ & $3556$ & none \invs \\ \hline
$2_m+99$ & 1 & $2\cdot10^{l+1}$ & $60k$ & $355600$ & $ -55, -40, 144, 4320$ \invs \\ \hline
$2_m+99$ & 2 & $2\cdot10^{l+2}$ & $600k$ & $35560000$ & $-216, \mathbf{200}, 704, 800,$ \invs \\
& & & & & $1500, 1800, 7400$ \invs \\
& & & & & $199200$ \invs \\ \hline
$2_m+22$ & 0 & $2\cdot10^l$ & $6k$ & $784$ & $-7, 0, 8, 56$ \invs \\ \hline
$2_m+22$ & 1 & $2\cdot10^{l+1}$ & $60k$ & $78400$ & $-40, 0, 56, 140, 480$ \invs \\ \hline
$2_m+22$ & 2 & $2\cdot10^{l+2}$ & $600k$ & $7840000$ & $-175, 0, 224, 800$ \invs \\ \hline
\end{tabular}

\pagebreak


\begin{thebibliography}{99}
\bibitem{BH}
D. A. Buell and R. H. Hudson, On runs of consecutive quadratic residues and quadratic nonresidues,  {\it BIT}, v. 24, 1984, pp. 243-247

\bibitem{PZGH}
A. Pethö, H. G. Zimmer, J. Gebel, E. Herrmann,
Computing all $S$-integral points on elliptic curves, Math. Proc. Camb.
Philos. Soc. 127 (1999), No.3, 383-402

\bibitem{Siegel}
C. L. Siegel, Uber einige Anwendungen diophantischer Approximationen,
Abh. Preuss. Akad. Wiss. Phys. Math. Kl. (1929), 41–69

\bibitem{Tolkein}
J.R.R. Tolkien, The Fellowship of the Ring, Ballantine, New York (1967)

\bibitem{Wright}
S. Wright, Patterns of quadratic residues and nonresidues for
infinitely many primes, {\it J. Number Theory}, {\bf 123}, Issue 1, March 2007,
Pages 120–132
\end{thebibliography}
\end{document}